\newtheorem{thm}{Theorem}[section]
\newtheorem{theorem}[thm]{Theorem}
\newtheorem{corollary}[thm]{Corollary}
\newtheorem{lemma}[thm]{Lemma}
\theoremstyle{definition}
\theoremstyle{remark}
\newtheorem{remark}[thm]{Remark}
\newtheorem*{ex}{Example}
\numberwithin{equation}{section}
\newcommand{\BC}{\mathcal{B}}
\newcommand{\dr}{\mathrm{d}}
\newcommand{\de}{\mathrm{\delta}}
\newcommand{\DR}{\mathrm{D}}
\newcommand{\eps}{\varepsilon}
\newcommand{\KC}{\mathcal{K}}
\newcommand{\vk}{\varkappa}
\newcommand{\la}{\lambda}
\newcommand{\La}{\Lambda}
\newcommand{\LC}{\mathcal{L}}
\newcommand{\NC}{\mathcal{N}}
\newcommand{\NR}{\mathrm{N}}
\newcommand{\Om}{\Omega}
\newcommand{\Si}{\Sigma}
\newcommand{\R}{\mathbb{R}}
\newcommand{\Sbb}{\mathbb{S}}
\newcommand{\const}{\mathrm{const}}
\newcommand{\diag}{\mathrm{diag}}
\newcommand{\dist}{\mathrm{dist}}
\newcommand{\off}{\mathrm{off}}
\newcommand{\Tr}{\mathrm{Tr\,}}
\def\leq {\leqslant}
\def\geq {\geqslant}
\begin{document}

\title[Counting Function of Integral Operators]
{Lower Bounds for the Counting Function\\ of Integral Operators}

\author{Y. Safarov}
\address{Department of Mathematics\\ 
King's College London\\
Strand, London WC2R 2LS\\
United Kingdom} 
\email{yuri.safarov@kcl.ac.uk}

\dedicatory{To Vladimir Rabinovich  on his 70th birthday}

\begin{abstract}
The paper presents a lower bound for the number of eigenvalues of an integral operator $\,K\,$ with continuous kernel $\,\KC\,$ lying in the interval $\,(-\infty,t)\,$ with $\,t\leq0\,$. The estimate is given in terms of some integrals of $\,\KC\,$. 

\end{abstract}

\date{March 2012}

\thanks{The research was partly carried out during my visit to Academia Sinica, Taipei. I am very grateful to my hosts, especially to Dr. Jin-Cheng Jiang, for their financial and scientific support.
}

\subjclass{45C05, 35P15}

\keywords{Eigenvalues, counting function, integral operators}

\maketitle

\section*{Introduction}

Consider a self-adjoint integral operator $\,K\,$ in the space $L_2(M,\nu)$ on a domain or a manifold $\,M\,$ provided with a finite measure $\,\nu\,$. If its integral kernel $\,\KC\,$ is continuous then the operator is compact and its spectrum consists of eigenvalues accumulating to zero. Such operators have been considered by many authors, most of whom studied the rate of convergence of eigenvalues and obtained various quantitative versions of the following general statement: {\it the smoother the kernel is, the faster the eigenvalues tend to zero} (see, for instance, \cite{BS} or \cite{K}).

The paper deals with a different, seemingly simple question: {\it how many negative eigenvalues are there?} More precisely, we are interested in obtaining explicit lower bounds for the number of negative eigenvalues in terms of the integral kernel $\,\KC\,$.

One can argue that in the generic case the dimensions of positive and negative eigenspaces must be the same, so that both of them are infinite dimensional and there are infinitely many negative eigenvalues. However, this argument is of little use when we need to study a particular integral operator.

It is not immediately clear what properties of $\,\KC\,$ guarantee that there are many negative eigenvalues. The fact that $\,\KC\,$ is real and negative on a large set is clearly insufficient (for instance, the operator with constant integral kernel $\,\KC\equiv-1\,$ has only one negative eigenvalue). On the positive side, if $\,\KC\,$ takes large negative values on the diagonal and the Hilbert--Schmidt norm of $\,K\,$ is relatively small, one can estimate the number of its negative eigenvalues as follows.

\begin{ex}
Let $\,M_-=\{\xi\in M\,:\,\KC(\xi,\xi)<0\}\,$. If $\,M_-\ne\varnothing\,$ then the number of negative eigenvalues of the operator $\,K\,$ is not smaller than 
$$
C_-\ :=\ \left(\int_{M_-}\KC(\xi,\xi)\,\dr\nu(\xi)\right)^2\left(\int_{M_-}\int_{M_-}|\KC(\xi,\eta)|^2\,\dr\nu(\xi)\,\dr\nu(\eta)\right)^{-1}.
$$
Indeed, if $\,K_-\,$ is the truncation of $\,K\,$ to the subspace $\,L_2(M_-,\nu)\,$ then $\,C_-=(\Tr K_-)^2\,\|K_-\|_2^{-2}\,$ where $\,\Tr\,$ and $\,\|\cdot\|_2\,$ stand for the trace and the Hilbert--Schmidt norm. Since $\,\Tr K_-<0\,$, we have
$$
(\Tr K_-)^2\,\|K_-\|_2^{-2}\ \leq\ \left(\sum_j\la_j\right)^2\left(\sum_j\la_j^2\right)^{-2}\ \leq\ \#\{\la_j\}\,,
$$
where $\la_j\,$ are the negative eigenvalues of $\,K_-\,$. Thus $\,C_-\,$ estimates the number of negative eigenvalues of $\,K_-\,$ and, consequently, of $\,K\,$ from below.
\end{ex}

The main result of the paper is Theorem \ref{T:integral} which provides a similar estimate involving some integrals of $\,\KC\,$. Unlike in the previous example, it
does not rely only on the behaviour of $\,\KC\,$ on the diagonal and takes into account the contribution of its off-diagonal part. 

Theorem \ref{T:integral} is stated and proved in Section \ref{S:main}. It is formulated in a very general setting but even in the simplest situation (say, for integral operators on a line segment) the result is not obvious. Section \ref{S:examples} contains some comments and examples. In particular, in Subsection \ref{S:DN} we discuss the link between the problems of estimating the number of negative eigenvalues and the difference between the Dirichlet and Neumann counting functions of the Laplace operator on a domain.

\section{The main theorem}\label{S:main}

Throughout the paper $\,\NC(A;t)\,$ denotes  the dimension of the eigenspace of
a self-adjoint operator (or a Hermitian matrix) $\,A\,$
corresponding to the interval $\,(-\infty,t)\,$.

Let $\,M\,$ be a Hausdorff topological space equipped with a locally
finite Borel measure $\,\nu\,$. We shall always be assuming that $\,M\,$ and  $\,\nu\,$ satisfy the following condition,
\medskip
\begin{enumerate}
\item[{\bf(C$_1$)}]
\ every open set $\,U\subset M\,$ contains infinitely many elements and has non-zero measure.
\end{enumerate}
\medskip

Let us consider the symmetric integral operator $K_0$ in the space $L_2(M,\nu)$ given by a continuous kernel $\,\KC(\eta,\xi)=\overline{\KC(\xi,\eta)}\,$,
\begin{equation}\label{K1}
K_0\ :\ u(\eta)\ \mapsto\ K_0u(\xi)\ :=\
\int_M\KC(\xi,\eta)\,u(\eta)\,\dr\nu(\eta)\,.
\end{equation}
We assume that the domain of $\,K_0\,$ consists of $\,L_2$-functions $\,u\,$ such that the integral on the right hand side of \eqref{K1} is absolutely convergent for almost all $\xi\in M\,$ and the function $\,K_0u\,$, defined by this integral, belongs to the space $\,L_2(M,\nu)\,$. Let $\,K\,$ be an arbitrary self-adjoint extension of $\,K_0\,$.

Let $\,\vk(\xi,\eta)\,$ be the smaller eigenvalue of the
Hermitian $\,2\times2$-matrix
\begin{equation}\label{KC2}
\KC^{(2)}(\xi,\eta)\
:=\ \begin{pmatrix}\KC(\xi,\xi) & \KC(\xi,\eta)\\
\KC(\eta,\xi) &\KC(\eta,\eta)\end{pmatrix}\,,
\end{equation}
that is, 
\begin{multline}\label{kappa}
\vk(\xi,\eta)\ =\ \frac{\KC(\xi,\xi)+\KC(\eta,\eta)}2\\
-\;\frac12\,\sqrt{\left(\KC(\xi,\xi)-\KC(\eta,\eta)\right)^2
+4\left|\KC(\xi,\eta)\right|^2}\,.
\end{multline}
Obviously, $\,\vk(\xi,\eta)\,$ is a continuous real-valued function on $\,M\times M\,$ such that $\,\vk(\xi,\eta)=\vk(\eta,\xi)\,$.

\begin{remark}\label{R:kappa}
By \eqref{kappa}, if $\,\KC(\xi,\xi)\,$ is identically equal to a constant $\,C\,$ then $\,\vk(\xi,\eta)=C-|\KC(\xi,\eta)|\,$.
\end{remark}

We shall say that a measure $\,\mu\,$ on $\,M\times M\,$ is {\sl
symmetric\/} if it is invariant with respect to the transformation
$\,(\xi,\eta)\mapsto(\eta,\xi)\,$. If $\,\mu\,$ is a symmetric measure on $\,M\times M\,$, we shall denote by $\,\mu'\,$ its marginal, that is, the measure on
$\,M\,$ such that $\,\mu'(S)=\mu\left(S\times M\right)$
for all measurable $\,S\subset M\,$. 
Finally, assuming that
\medskip
\begin{enumerate}
\item[{\bf(C$_2$)}]
\ $\,0<\int_{M\times M}\left(t-\vk(\xi,\eta)\right)_+\dr\mu(\xi,\eta)<\infty\,$
\end{enumerate}
\medskip
where 
$$
\left(t-\vk(\xi,\eta)\right)_+\ :=\ \begin{cases}t-\vk(\xi,\eta)&\text{if\ }t-\vk(\xi,\eta)\geq0\,,\\ 0&\text{if\ }t-\vk(\xi,\eta)<0\,,\end{cases}
$$
let us denote
\begin{equation}\label{constant}
C_t(\mu)\ :=\
\frac{\left(\int_{M\times M}\left(t-\vk(\xi,\eta)\right)_+\dr\mu(\xi,\eta)\right)^2}{
\int_M\int_M|\KC(\xi,\eta)|^2\,\dr\mu'(\xi)\,\dr\mu'(\eta)}\,.
\end{equation}

\begin{theorem}\label{T:integral}
Let the condition {\bf(C$_1$)} be fulfilled. If 
$\;\inf\vk<t\leq0\,$  then
\begin{equation}\label{estimate}
\NC(K,t)\ \geq\ \frac12\;+\;\frac{C_t(\mu)}{16}
\end{equation}
for all symmetric Borel measures $\,\mu\,$ satisfying the condition  {\bf(C$_2$)}.
\end{theorem}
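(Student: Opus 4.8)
The strategy is a variational/trace argument: build a finite-dimensional test subspace and compare $\NC(K,t)$ to the number of eigenvalues below $t$ of a compression of $K$ to that subspace, then estimate the latter from below using a Chebyshev-type inequality exactly as in the introductory Example. The natural test functions come from pairs of points: for a pair $(\xi,\eta)$ with $\vk(\xi,\eta)<t$, the $2\times2$ matrix $\KC^{(2)}(\xi,\eta)$ has its smaller eigenvalue below $t$, so the corresponding eigenvector $v_{\xi,\eta}\in\C^2$ gives a combination $\phi_{\xi,\eta}$ of two ``delta-like'' vectors concentrated near $\xi$ and $\eta$ on which the quadratic form of $K$ is roughly $\vk(\xi,\eta)$ times the norm squared. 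Since $K$ has a continuous kernel and condition {\bf(C$_1$)} guarantees every neighbourhood has positive measure and infinitely many points, one can realise these as genuine $L_2$ functions (normalised indicators of small balls, or approximate identities) with form values approximating the matrix entries to any desired accuracy.

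First I would make this precise by fixing a large finite set of pairs, choosing disjoint small neighbourhoods around the points involved so that the associated normalised indicator functions are orthonormal, and checking that the compression of $K$ to their span is, up to an arbitrarily small error controlled by the modulus of continuity of $\KC$, a block matrix whose diagonal $2\times2$ blocks are the matrices $\KC^{(2)}(\xi_k,\eta_k)$. Then $\NC(K,t)$ is at least $\NC$ of this finite matrix at $t$, and by the min--max principle applied inside the span of the eigenvectors $v_{\xi_k,\eta_k}$ one gets a further finite-dimensional Hermitian matrix $B$, all of whose ``diagonal'' contributions are at most $\vk(\xi_k,\eta_k)<t$; running the Chebyshev inequality $(\Tr(t-B)_+)^2\le \NC(B,t)\cdot\|(t-B)_+\|_2^2$ and bounding $\|(t-B)_+\|_2^2$ by a Hilbert--Schmidt quantity involving the off-diagonal kernel entries yields a lower bound of the shape $c\,C_t(\mu_{\mathrm{disc}})$ for the discrete approximating measure $\mu_{\mathrm{disc}}$.

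The passage from finitely many pairs to a general symmetric Borel measure $\mu$ satisfying {\bf(C$_2$)} is then a limiting argument: approximate $\mu$ by atomic symmetric measures, note that both $\int(t-\vk)_+\,\dr\mu$ and $\int\int|\KC|^2\,\dr\mu'\dr\mu'$ pass to the limit (continuity of $\vk$ and $|\KC|^2$, finiteness from {\bf(C$_2$)}), so $C_t(\mu)$ is a limit of the discrete quantities. The additive constant $1/2$ and the precise divisor $16$ should fall out of being careful with the Chebyshev step: the ``$1/2$'' reflects that $\NC(B,t)$ is an integer $\ge 1$ (so one may write $\NC\ge \tfrac12+\tfrac12\NC$ after separating off a unit), and the $16$ absorbs the factor coming from comparing $\|(t-B)_+\|_2^2$ with the full sum $\int\int|\KC|^2$ rather than its restriction to the relevant pairs — off-diagonal cross terms between different pairs inflate the Hilbert--Schmidt norm by a bounded factor.

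I expect the main obstacle to be the second, bookkeeping-heavy step: controlling $\|(t-B)_+\|_2^2$ by $\int_M\int_M|\KC(\xi,\eta)|^2\,\dr\mu'(\xi)\,\dr\mu'(\eta)$ with the \emph{right} constant. The subtlety is that $B$ lives on the span of the eigenvectors $v_{\xi_k,\eta_k}$, which are not orthogonal across different pairs, and its entries are bilinear in the $v$'s and in the kernel values $\KC(\text{point}_i,\text{point}_j)$; one must show the resulting quadratic expression is dominated by the ``square of the $L_2(\mu')$-mass of $\KC$'' up to the constant $16$, uniformly in the approximation. Handling the normalisation and the fact that $\mu'$ rather than $\mu$ appears (so a pair $(\xi,\eta)$ contributes to $\mu'$ at \emph{both} $\xi$ and $\eta$) is exactly where the factor of $2$'s combine into $16$; once that inequality is pinned down, the rest is routine.
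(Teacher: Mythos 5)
Your overall frame is the right one: build a finite-rank test space from bump functions around pairs with $\vk(\xi,\eta)<t$, compress $K$ to it, estimate the number of eigenvalues of the compression below $t$, and optimize over the number $n$ of pairs. But the key technical device that makes the paper's argument produce exactly $C_t(\mu)$ is missing. The paper does \emph{not} compress to the one-dimensional eigenvector $v_{\xi_k,\eta_k}$ of each $\KC^{(2)}(\xi_k,\eta_k)$; it keeps the full $2n\times2n$ matrix $\KC^{(2n)}-tI$ and conjugates it by the block-diagonal weight $\La=\diag\bigl((t-\vk(\xi_j,\eta_j))^{-1/2}I_2\bigr)$. After this conjugation the block-diagonal part has $-1$ as an eigenvalue of multiplicity $\geq n$, so $\NC(\tilde\KC^{(2n)},0)\geq n-\#\{\text{eigenvalues of }\tilde\KC^{(2n)}_\off\geq1\}\geq n-\|\tilde\KC^{(2n)}_\off\|_2^2$, and $\|\tilde\KC^{(2n)}_\off\|_2^2$ contains only off-block terms weighted by $(t-\vk_i)^{-1}(t-\vk_j)^{-1}$. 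Averaging this over configurations drawn from the reweighted measure $\dr\tilde\mu=(t-\vk)\dr\mu$ makes these weights cancel and yields $4n(n-1)C_t(\mu)^{-1}$ on the nose.

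In your version, the compressed matrix $B$ has diagonal entries $\vk(\xi_k,\eta_k)$, so the quantity $\|(tI-B)_+\|_2^2$ that your Chebyshev step requires necessarily contains the diagonal contribution $\sum_j(t-\vk_j)^2$ in addition to the off-diagonal $|B_{ij}|^2$. That term does not appear in the denominator of $C_t(\mu)$, so even after optimizing the configuration you would obtain a strictly weaker functional, not the stated bound. Two further points are off. First, the constant $\tfrac12+\tfrac{C_t(\mu)}{16}$ does not come from ``separating off a unit''; it is the value of $\max_{n\in\Z_{>0}}\bigl(n-4n(n-1)C_t(\mu)^{-1}\bigr)$, obtained by choosing $n$ so that $\bigl|2n-\tfrac{C_t(\mu)+4}{4}\bigr|\leq1$. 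Second, the paper never approximates $\mu$ by atomic measures and passes to a limit — the argument is the opposite: integrate $\|\tilde\KC^{(2n)}_\off\|_2^2$ over $\Si_t^n$ against $\tilde\mu^{\otimes n}$ to show some configuration realises the average, then perturb that configuration slightly (using {\bf(C$_1$)} and continuity of $\KC$) so that all $2n$ points are distinct and the bump-function realisation works. Trying instead to approximate $\mu$ by discrete measures and pass to the limit in a nonlinear lower bound for $\NC(K,t)$ is a genuinely different (and more delicate) route that you would still need to justify.
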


\begin{proof}  Consider the open set
$$
\Si_t\ :=\ \{(\xi,\eta)\in M\times
M\,:\,\vk(\xi,\eta)<t\}\,,
$$
and let $\,M_t\,$ be its projection onto $\,M\,$,
$$
M_t\ :=\ \left\{\xi\in M\,:\,(\xi,M)\bigcap
\Si_t\ne\varnothing\right\}.
$$
Further on, without loss of generality, we shall be assuming that $\,\mu\,$ is supported on $\,\Si_t\,$ (if not, we replace $\,\mu\,$ with its restriction to $\,\Si_t\,$).

Given a collection
$$
\theta_n\ =\ \left((\xi_1,\eta_1),\dots,(\xi_n,\eta_n)\right)\in
\Si_t^n\ :=\ \underbrace{\Si_t\times\dots\times\Si_t}_{n\
\text{times}}
$$
of points $\,(\xi_j,\eta_j)\in\Si_t\,$, let us consider the
Hermitian $\,2n\times2n$-matrix
$$
\KC^{(2n)}(\theta_n)\ :=\ \begin{pmatrix} \KC_{1,1} &
\KC_{1,2}& \cdots &\KC_{1,n}\\
\KC_{2,1}&\KC_{2,2}& \cdots &\KC_{2,n}\\
\vdots & \vdots & \ddots &\vdots\\
\KC_{n,1} & \KC_{n,2} & \cdots & \KC_{n,n}
\end{pmatrix}
$$
where 
$$
\KC_{i,j}\ :=\ \begin{pmatrix}\KC(\xi_i,\xi_j) & \KC(\xi_i,\eta_j)\\
\KC(\eta_i,\xi_j) &\KC(\eta_i,\eta_j)\end{pmatrix}\ =\
\left(\KC_{j,i}\right)^*.
$$
Let 
$$
\tilde\KC^{(2n)}(\theta_n)\ :=\ \La\left(\KC^{(2n)}(\theta_n)-tI\right)\La\,,
$$
where $\,I\,$ is the identity $\,2n\times2n$-matrix
and $\,\La=\diag\{\La_1,\La_2,\ldots,\La_n\}\,$ is the block
diagonal matrix formed by the $\,2\times2$-matrices
$$
\La_j\ =\ \left(t-\vk(\xi_j,\eta_j)\right)^{-1/2}\,\begin{pmatrix}1 & 0\\
0 &1\end{pmatrix}\,.
$$
Since $\,\La>0\,$, we have
\medskip
\begin{enumerate}
\item[{\bf(i)}]
$\,\NC\left(\KC^{(2n)}(\theta_n)\,,\,t\right)
=\NC\left(\tilde\KC^{(2n)}(\theta_n)\,,\,0\right)\,$ for all
$\,\theta_n\in\Si_t^n\,$.
\end{enumerate}
\medskip

By direct calculation,
$$
\tilde\KC^{(2n)}(\theta_n)\ :=\ \begin{pmatrix}
\tilde\KC_{1,1}-t\La_1^2 &
\tilde\KC_{1,2}& \cdots &\tilde\KC_{1,n}\\
\tilde\KC_{2,1}&\tilde\KC_{2,2}-t\La_2^2& \cdots &\tilde\KC_{2,n}\\
\vdots & \vdots & \ddots &\vdots\\
\tilde\KC_{n,1} & \tilde\KC_{n,2} & \cdots &
\tilde\KC_{n,n}-t\La_n^2
\end{pmatrix}
\,,
$$
where 
$$
\tilde\KC_{i,j}\ :=\ \left(t-\vk(\xi_i,\eta_i)\right)^{-1/2}
\left(t-\vk(\xi_j,\eta_j)\right)^{-1/2}\,\KC_{i,j}\ =\ (\tilde\KC_{j,i})^*\,.
$$
Let us split $\,\tilde\KC^{(2n)}(\theta_n)\,$ into the sum of the
block diagonal matrix
$$
\tilde\KC^{(2n)}_\diag(\theta_n)\ :=\
\diag\left\{\tilde\KC_{1,1}-t\La_1^2\,,\,\tilde\KC_{2,2}-t\La_2^2\,,\,\ldots\,,\,
\tilde\KC_{n,n}-t\La_n^2\right\}
$$
and the matrix $\,\tilde\KC_\off^{(2n)}(\theta_n):=
\tilde\KC^{(2n)}(\theta_n)-\tilde\KC^{(2n)}_\diag(\theta_n)\,$. The equalities
$$
\tilde\KC_{j,j}-t\La_j^2\ =\
\left(t-\vk(\xi_j,\eta_j)\right)^{-1}\left(\KC^{(2)}(\xi_j,\eta_j)-tI\right),\qquad j=1,\ldots,n\,,
$$
imply that
\medskip
\begin{enumerate}
\item[{\bf(ii)}]
$\,-1\,$ is an eigenvalue of $\,\tilde\KC^{(2n)}_\diag(\theta_n)\,$
of multiplicity $\,n\,$ or higher for each $\,\theta_n\in\Si_t^n\,$.
\end{enumerate}
\medskip

On the other hand,
\begin{multline}\label{off-diag}
\|\tilde\KC_\off^{(2n)}(\theta_n)\|_2^2\\
=\ \sum_{i\ne j}
\frac{|\KC(\xi_i,\xi_j)|^2+|\KC(\xi_i,\eta_j)|^2
+|\KC(\eta_i,\xi_j)|^2+|\KC(\eta_i,\eta_j)|^2}
{\left(t-\vk(\xi_i,\eta_i)\right)\,\left(t-\vk(\xi_j,\eta_j)\right)}\;,
\end{multline}
where $\,\|\cdot\|_2\,$ is the Hilbert--Schmidt norm.
Let us consider the absolutely continuous with respect to $\,\mu\,$
measure $\,\tilde\mu\,$ with the density
$\,\left(t-\vk(\xi,\eta)\right)\,$, so that
$\,\dr\tilde\mu(\xi,\eta)=\left(t-\vk(\xi,\eta)\right)\dr\mu(\xi,\eta)\,$.
Then
\begin{multline*}
\int_{\Si_t^n}\left(t-\vk(\xi_i,\eta_i)\right)^{-1}\left(t-\vk(\xi_j,\eta_j)\right)^{-1}\,
|\KC(\xi_i,\xi_j)|^2\,\dr\tilde\mu(\xi_1,\eta_1)\ldots\dr\tilde\mu(\xi_n,\eta_n)\\
=\left(\tilde\mu(\Si_t)\right)^{n-2}\int_{\Si_t}\int_{\Si_t}
|\KC(\xi_i,\xi_j)|^2\,\dr\mu(\xi_i,\eta_i)\,\dr\mu(\xi_j,\eta_j)
=(C_t(\mu))^{-1}\left(\tilde\mu(\Si_t)\right)^n
\end{multline*}
for all $\,i\ne j\,$, where $\,C_t(\mu)\,$ is defined by
\eqref{constant} and $\,\tilde\mu(\Si_t)\,$ is finite in view of {\bf(C$_2$)}. Similar calculations show that the integrals over
$\,\Si_t^n\,$ with respect to
$\,\dr\tilde\mu(\xi_1,\eta_1)\ldots\dr\tilde\mu(\xi_n,\eta_n)\,$ of
all other term in the right hand side of \eqref{off-diag} are also
equal to $\,(C_t(\mu))^{-1}\left(\tilde\mu(\Si_t)\right)^n\,$.
Therefore
$$
\left(\tilde\mu(\Si_t)\right)^{-n}\int_{\Si_t^n}\|\tilde\KC_\off^{(2n)}(\theta_n)\|_2^2\;
\dr\tilde\mu(\xi_1,\eta_1)\ldots\dr\tilde\mu(\xi_n,\eta_n)\ =\
4\,n(n-1)\,(C_t(\mu))^{-1}
$$
and, consequently, there exists a point
$\,\theta_{n,0}\in\Si_t^n\,$ such that
$$
\|\tilde\KC_\off^{(2n)}(\theta_{n,0})\|_2^2\
\leq\ 4\,n(n-1)\,(C_t(\mu))^{-1}.
$$

Since $\,\|\tilde\KC_\off^{(2n)}(\theta_{n})\|_2^2\,$ continuously
depends on $\,\theta_n\,$ and every open set contains infinitely many elements, for each $\,\eps>0\,$ there exists a point
$$
\theta_{n,\eps}\ =\
\left((\xi_{1,\eps},\eta_{1,\eps}),\dots,(\xi_{n,\eps},\eta_{n,\eps})\right)\
\in\ \Si_t^n\,
$$
such that 
\begin{equation}\label{HS}
\|\tilde\KC_\off^{(2n)}(\theta_{n,\eps})\|_2^2\ \leq\
4\,n(n-1)\,(C_t(\mu))^{-1}+\eps
\end{equation}
and all the entries $\,\xi_{i,\eps}\,$ and $\,\eta_{j,\eps}\,$
are distinct. The estimate \eqref{HS} implies that the number of eigenvalues of the
matrix $\,\tilde\KC_\off^{(2n)}(\theta_{n,\eps})\,$ lying in the
interval $\,[1,\infty)\,$ does not exceed
$\,4\,n(n-1)\,(C_t(\mu))^{-1}+\eps\,$. Therefore, in view of
{\bf(i)} and {\bf(ii)},
$$
\NC\left(\KC^{(2n)}(\theta_{n,\eps})\,,\,t\right)=
\NC\left(\tilde\KC^{(2n)}(\theta_{n,\eps})\,,\,0\right)\geq
n-4\,n(n-1)\,(C_t(\mu))^{-1}-\eps\,.
$$

Since the measure $\,\nu\,$ is locally finite and the function
$\,\KC\,$ is continuous, for every $\,\de>0\,$ and 
$\,(\xi,\eta)\in M\times M\,$ there exist open neighbourhoods
$\,U_{\xi,\de}\subset M\,$ and $\,U_{\eta,\de}\subset M\,$ of the
points $\,\xi\,$ and $\,\eta\,$ such that
$\,\nu(U_{\xi,\de})<\infty\,$, $\,\nu(U_{\eta,\de})<\infty\,$ and
$$
|\KC(\xi,\eta)-\KC(\xi',\eta')|\ <\ \de\,,\qquad\forall\xi'\in
U_{\xi,\de}\,,\quad\forall\eta'\in U_{\eta,\de}\,.
$$
In view of {\bf(C$_1$)}, $\,\nu(U_{\xi,\de})>0\,$ and $\,\nu(U_{\eta,\de})>0\,$. Let $\,u_{\xi,\de}:=\left(\nu(U_{\xi,\de})\right)^{-1}\chi_{\xi,\de}\,$
and
$\,u_{\eta,\de}:=\left(\nu(U_{\eta,\de})\right)^{-1}\chi_{\eta,\de}\,$, where $\,\chi_{\xi,\de}\,$ and $\,\chi_{\eta,\de}\,$ are the
characteristic functions of the sets $\,U_{\xi,\de}\,$ and
$\,U_{\eta,\de}\,$.
Then $\,u_{\xi,\de}\,,\,u_{\eta,\de}\in L_2(M,\nu)\,$ and
\begin{equation}\label{delta}
\left|\KC(\xi,\eta)
-\left(Ku_{\xi,\de},u_{\eta,\de}\right)_{L_2(M,\nu)}\right| \ <\
\de\,.
\end{equation}

Let us choose the neighbourhoods $\,U_{\xi,\de}\,$ and
$\,U_{\eta,\de}\,$ so small that all the functions
$\,u_{\xi_{i,\eps},\de}\,$ and $\,u_{\eta_{j,\eps},\de}\,$ have
disjoint supports, and let $\,K_{\eps,\de}\,$ be the contraction of
$\,K\,$ to the $\,2n$-dimensional subspace spanned by these
functions. In view of \eqref{delta}, in the basis
$$
\{u_{\xi_{1,\eps},\de},\,\ldots,\,u_{\xi_{n,\eps},\de},\,u_{\eta_{1,\eps},\de},\,
\ldots,\,u_{\eta_{n,\eps},\de}\}
$$
the operator $\,K_{\eps,\de}\,$ is represented by a
$\,2n\times2n$-matrix that converges to
$\,\KC^{(2n)}(\theta_{n,\eps})\,$ as $\,\de\to0\,$. Consequently,
$$
\NC(K_{\eps,\de},t)\ \geq\ n-4\,n(n-1)\,(C_t(\mu))^{-1}-\eps
$$ 
for
all sufficiently small $\,\de\,$. By the variational principle, the
same estimate holds $\,\NC(K,t)\,$. Letting $\,\eps\to0\,$, we see
that
\begin{multline*}
\NC(K,t)\ \geq\ n-4\,n(n-1)\,(C_t(\mu))^{-1}\\
=\
\frac1{C_t(\mu)}\,\left(\left(\frac{C_t(\mu)+4}{4}\right)^2
-\left(2n-\frac{C_t(\mu)+4}{4}\right)^2\right)
\end{multline*}
for all positive integers $\,n\,$.
Choosing $\,n_\mu\geq1\,$ such that
$\,\left|2n_\mu-\frac{C_t(\mu)+4}{4}\right|\leq1\,$ and
substituting $\,n=n_\mu\,$ in the above inequality, we obtain
\eqref{estimate}.
\end{proof}

\section{Comments and examples}\label{S:examples}

\subsection{General comments}\label{S:remarks}

The estimate \eqref{estimate} implies that $\,K\,$ has at least one eigenvalue below a negative $\,t\,$ whenever $\,\inf\vk<t\,$. Indeed, in this case \eqref{estimate} with any symmetric measure $\mu$ satisfying {\bf(C$_2$)} shows that $\,\NC(K,t)\geq\frac12\,$. Since the function $\,\NC(K,t)\,$ is integer-valued, it follows that $\,\NC(K,t)\geq1\,$.

If $\,C_t(\mu)\leq8\,$ then \eqref{estimate} implies only
the obvious estimate $\,\NC(K,t)\geq1\,$. In order to obtain a
better result, one has to increase the constant $\,C_t(\mu)\,$ by
choosing an appropriate measure $\,\mu\,$. In particular, Theorem \ref{T:integral} gives a good estimate when the function $\,\vk(\xi,\eta)\,$ is takes large negative values on a ``thin'' subset $\,\Si'\subset M\times
M\,$, the measure $\,\mu\,$ is supported on $\,\Si'\,$ and
$\,|\KC|\,$ is relatively small outside a neighbourhood of $\,\Si'\,$. On the
contrary, if $\,\KC\,$ is almost constant on $\,M\times M\,$ then $\,\vk\approx\KC-|\KC|\,$ and
$\,C_t(\mu)\approx(t-\KC+|\KC|)_+^2\,|\KC|^{-2}\leq4\,$.

A possible strategy of optimizing the choice of $\mu$ is to fix the marginal $\mu'$ and to maximize $\int\left(t-\vk(\xi,\eta)\right)_+\dr\mu(\xi,\eta)$ over the set of symmetric measures $\mu$ with the fixed marginal. The minimization (or maximization) of an integral of the form $\int f(\xi,\eta)\,\dr\mu(\xi,\eta)$ over the set of measures with fixed marginals is known as Kantorovich's problem. It has been solved for some special functions $f(\xi,\eta)$ (see, for instance, \cite{GM} and references therein). 

I won't elaborate further on this problem, as it requires different techniques (and a different author). Instead, in the rest of the paper we shall consider a couple of examples demonstrating possible applications of Theorem \ref{T:integral}.

\subsection{Operators with difference kernels in $\R^n$}\label{S:difference}
Let $\nu$ be a Borel measure on $\R^n$, and let $h$ be a continuous function on $\R^n$ such that $h(-\theta)=\overline{h(\theta)}$. Consider the symmetric operator  
\begin{equation}\label{diff1}
u(\eta)\ \mapsto\ K_0u(\xi)\ :=\
\int h(\xi-\eta)\,u(\eta)\,\dr\nu(\eta)
\end{equation}
in the space $L_2(\R^n,\nu)$. In the notation of Section \ref{S:main}, $M=\R^n$, $\,\KC(\xi,\eta)=h(\xi-\eta)\,$ and $\,\vk(\xi,\eta)=h(0)-|h(\xi-\eta)|\,$ (see Remark \ref{R:kappa}).

Let us fix $\,t\leq0\,$ and $\theta\in\R^n$ such that $|h(\theta)|>h(0)-t$, and define a measure $\mu_\theta$ on $\,\R^{2n}\,$ by the identity
$$
\int f(\xi,\eta)\,\dr\mu_\theta(\xi,\eta)\ =\ 
\int\left(f(\eta,\eta+\theta)+f(\eta+\theta,\eta)\right)\,\dr\tilde\mu(\eta)\,,
$$
where $\tilde\mu$ is a probability measure on $\R^n$.
Clearly, the measure $\mu_\theta$ is symmetric, and its marginal coincides with the measure $\,\mu'_\theta\,$ on $\,\R^n\,$ given by the equality
$$
\int v(\eta)\,\dr\mu'_\theta(\eta)\ =\ \int \left(v(\eta)+v(\eta+\theta)\right)\,\dr\tilde\mu(\eta)\,.
$$

We have 
\begin{equation}\label{diff2}
\int\left(t-\vk(\xi,\eta)\right)_+\dr\mu_\theta(\xi,\eta)\ =\ 2\left(|h(\theta)|-h(0)+t\right)
\end{equation}
and
\begin{multline}\label{diff3}
\iint|\KC(\xi,\eta)|^2\,\dr\mu'_\theta(\xi)\,\dr\mu'_\theta(\eta)\\ 
=
\iint\left(2|h(\xi-\eta)|^2+|h(\xi-\eta+\theta)|^2+|h(\xi-\eta-\theta)|^2\right)\dr\tilde\mu(\xi)\,\dr\tilde\mu(\eta).
\end{multline}
Since $\,|h(\xi-\eta-\theta)|=|h(\eta-\xi+\theta)|\,$ and
$$
\iint|h(\eta-\xi+\theta)|^2\,\dr\tilde\mu(\xi)\,\dr\tilde\mu(\eta)\ =\ \iint|h(\xi-\eta+\theta)|^2\,\dr\tilde\mu(\xi)\,\dr\tilde\mu(\eta)\,,
$$
the equality \eqref{diff3} can be rewritten in the form
\begin{multline}\label{diff4}
\iint|\KC(\xi,\eta)|^2\,\dr\mu'(\xi)\,\dr\mu'(\eta)\\
=\ 2\iint\left(|h(\xi-\eta)|^2+|h(\xi-\eta+\theta)|^2\right)\,\dr\tilde\mu(\xi)\,\dr\tilde\mu(\eta)\,.
\end{multline}
In view of \eqref{diff2} and \eqref{diff4}, Theorem \ref{T:integral} implies that 
\begin{equation}\label{estimate1}
\NC(K,t)\ \geq\ \ \frac12\;+\; \frac{\left(|h(\theta)|-h(0)+t\right)^2}{8\iint\left(|h(\xi-\eta)|^2+|h(\xi-\eta+\theta)|^2\right)\,\dr\tilde\mu(\xi)\,\dr\tilde\mu(\eta)}
\end{equation}
for all probability measures $\,\tilde\mu\,$ on $\,\R^n\,$.

Let $\,\dr\tilde\mu(\xi)=\eps^n\chi(\eps\xi)\,\dr\xi\,$ where $\,\chi\,$ is the characteristic function of the unit ball. One can easily see that 
\begin{multline*}
\limsup_{\eps\to0}\iint\left(|h(\xi-\eta)|^2+|h(\xi-\eta+\theta)|^2\right)\,\eps^{2n}\chi(\eps\xi)\,\chi(\eps\eta)\,\dr\xi\,\dr\eta\\
\leq\ 2\limsup_{\theta\to\infty}|h(\theta)|^2\,.
\end{multline*}
Passing to the limit in \eqref{estimate1} and optimizing the choice of $\,\theta\,$, we obtain 

\begin{corollary}\label{C:convolution}
Let $\,K\,$ be a self-adjoint extension of the operator \eqref{diff1}.
If the measure $\,\nu\,$ satisfies the condition {\bf(C$_1$)} and $\,h(0)-\sup_{\theta\in\R^n}|h(\theta)|<t\leq0\,$
then
\begin{equation}\label{estimate2}
\NC(K,t)\ \geq\ \frac12\;+\;\left(\frac{\sup_{\theta\in\R^n}|h(\theta)|-h(0)+t}{4\,\limsup_{\theta\to\infty}|h(\theta)|}\right)^2.
\end{equation}
\end{corollary}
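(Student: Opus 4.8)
The plan is to obtain Corollary \ref{C:convolution} as a limiting case of the estimate \eqref{estimate1}, which has already been derived from Theorem \ref{T:integral} via the special choice of measure $\,\mu_\theta\,$ built from a probability measure $\,\tilde\mu\,$ on $\,\R^n\,$. The strategy is to feed into \eqref{estimate1} a one-parameter family of measures $\,\tilde\mu=\tilde\mu_\eps\,$ concentrating mass on a large ball, pass to the limit $\,\eps\to0\,$, and then optimize over $\,\theta\,$.

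First I would take $\,\dr\tilde\mu_\eps(\xi)=\eps^n\chi(\eps\xi)\,\dr\xi\,$ with $\,\chi\,$ the indicator of the unit ball (suitably normalized so that this is a probability measure --- the normalizing constant $\,|B_1|^{-1}\,$ is harmless and I will suppress it as the paper does). The numerator of the second term in \eqref{estimate1}, namely $\,(|h(\theta)|-h(0)+t)^2\,$, does not depend on $\,\eps\,$ at all, so only the denominator needs attention. I would split the integral $\,\iint\left(|h(\xi-\eta)|^2+|h(\xi-\eta+\theta)|^2\right)\dr\tilde\mu_\eps(\xi)\,\dr\tilde\mu_\eps(\eta)\,$ and estimate each piece: as $\,\eps\to0\,$ the supports spread out to all of $\,\R^n\,$ while the total mass stays equal to $1$, so by a standard averaging argument the contribution of any bounded region becomes negligible and only the behaviour of $\,|h|^2\,$ at infinity survives. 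This is precisely the content of the displayed $\,\limsup\,$ inequality in the text, which bounds the $\,\limsup_{\eps\to0}\,$ of this double integral by $\,2\limsup_{\theta\to\infty}|h(\theta)|^2\,$.

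With that bound in hand, taking $\,\limsup_{\eps\to0}\,$ in \eqref{estimate1} (the left side $\,\NC(K,t)\,$ is independent of $\,\eps\,$, so the inequality passes to the limit of the right side) yields
$$
\NC(K,t)\ \geq\ \frac12\;+\;\frac{\left(|h(\theta)|-h(0)+t\right)^2}{16\,\limsup_{\theta'\to\infty}|h(\theta')|^2}
$$
for every fixed $\,\theta\in\R^n\,$ with $\,|h(\theta)|>h(0)-t\,$. Finally I would take the supremum over $\,\theta\,$ on the right. Since $\,s\mapsto s^2\,$ is increasing on $\,[0,\infty)\,$ and, under the hypothesis $\,h(0)-\sup_\theta|h(\theta)|<t\leq0\,$, the quantity $\,|h(\theta)|-h(0)+t\,$ is nonnegative for $\,\theta\,$ near the maximizer, $\,\sup_\theta\left(|h(\theta)|-h(0)+t\right)^2=\left(\sup_\theta|h(\theta)|-h(0)+t\right)^2\,$, which gives \eqref{estimate2} after writing $\,16=(4)^2\,$. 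A minor point requiring care: if the supremum of $\,|h|\,$ is attained only in a limit, one should note that $\,\theta\,$ with $\,|h(\theta)|\,$ arbitrarily close to $\,\sup|h|\,$ still satisfies $\,|h(\theta)|>h(0)-t\,$, so \eqref{estimate1} is applicable and the supremum is approached.

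The main obstacle is the justification of the $\,\limsup\,$ inequality for the denominator --- i.e. showing that the mass escaping to infinity forces the double integral down to (twice) $\,\limsup_{\theta\to\infty}|h(\theta)|^2\,$. One has to be slightly careful because $\,h\,$ need not be bounded a priori, but the hypothesis $\,h(0)-\sup|h|<t\le 0\,$ together with continuity of $\,h\,$ forces $\,\limsup_{\theta\to\infty}|h(\theta)|<\infty\,$ (otherwise the right-hand side of \eqref{estimate2} would be a trivial statement anyway), and a dominated-convergence / uniform-integrability style argument on the spreading balls then closes it. Everything else is bookkeeping: the estimate \eqref{estimate1} is already proved, and the passage to the limit and the optimization over $\,\theta\,$ are elementary.
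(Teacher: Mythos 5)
Your proposal matches the paper's own derivation: both take the spreading family $\,\dr\tilde\mu_\eps(\xi)=\eps^n\chi(\eps\xi)\,\dr\xi\,$ (normalized to a probability measure) in \eqref{estimate1}, pass to the limit $\,\eps\to0\,$ using the displayed $\,\limsup\,$ bound, and then optimize over $\,\theta\,$; your extra remarks about the normalization constant cancelling in $\,C_t(\mu)\,$ and about reducing to the case $\,\limsup_{\theta\to\infty}|h(\theta)|<\infty\,$ are correct tidying-up of points the paper leaves implicit.
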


In particular, \eqref{estimate2} implies that $\,\NC(K,0)=\infty\,$ whenever $\,h\not\equiv0\,$ and $\,\lim_{\theta\to\infty}|h(\theta)|=0\,$.

\subsection{Dirichlet and Neumann counting
functions}\label{S:DN}
Consider the Laplace operator $\,\Delta\,$ on an open domain $\,\Om\subset\R^d\,$, and denote by $\,N_\DR(\la)\,$ and $\,N_\NR(\la)\,$ the numbers of its Dirichlet and Neumann eigenvalues lying in the interval $\,[0,\la^2)\,$.

Let $\,G_\la:=\left\{f\in L_2(\Om)\,\,:\,-\Delta f=\la^2
f\right\}\,$, where the equality $\,-\Delta u=\la^2f\,$ is
understood in the sense of distributions, and let $\,\BC_\la\,$ be
the self-adjoint operator in $\,G_\la\,$ generated by the truncation
of the quadratic form $\,\|\nabla
f\|_{L_2(\Om)}^2-\lambda^2\,\|f\|_{L_2(\Om)}^2\,$ to the subspace
$\,G_\la\,$. 

\begin{lemma}\label{lem:Fr}
For any open bounded set $\,\Om\,$,
\begin{enumerate}
\item[(1)]
the kernel of $\,\BC_\la\,$ is spanned by the Dirichlet and Neumann
eigenfunctions corresponding to $\,\la^2\,$;
\item[(2)]
$\;N_\NR(\la)-N_\DR(\la)=n_\DR(\la)+g^-(\la)\;$, where
$\,n_\DR(\la)\,$ is the number of linearly independent Dirichlet
eigenfunctions corresponding to the eigenvalue $\,\lambda^2\,$, and
$\,g^-(\la)\,$ is the dimension of the negative eigenspace of
$\,\BC_\la\,$.
\end{enumerate}
\end{lemma}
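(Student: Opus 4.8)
The plan is to identify $G_\la$ and the form generating $\BC_\la$ explicitly in terms of boundary data, and then to read off both assertions from the standard correspondence between solutions of the Helmholtz equation, the Dirichlet counting function and the Neumann counting function. First I would observe that any $f\in G_\la$ is a distributional solution of $-\Delta f=\la^2 f$ in $L_2(\Om)$, hence (by elliptic regularity) is smooth in the interior; the quadratic form $\|\nabla f\|^2-\la^2\|f\|^2$ is then finite on a dense subset of $G_\la$ and, after closure, defines the self-adjoint operator $\BC_\la$. Integrating by parts on smooth approximations, one gets for $f$ in the form domain
\begin{equation}\label{eq:green}
\|\nabla f\|_{L_2(\Om)}^2-\la^2\|f\|_{L_2(\Om)}^2\ =\ \langle \partial_n f,\,f\rangle_{\partial\Om}\,,
\end{equation}
at least formally, so that $\BC_\la$ is, up to the boundary pairing, the Dirichlet-to-Neumann map restricted to $G_\la$; the kernel of $\BC_\la$ therefore consists of those $f\in G_\la$ with vanishing boundary pairing in \eqref{eq:green}, which forces either $f|_{\partial\Om}=0$ (a Dirichlet eigenfunction for $\la^2$) or $\partial_n f|_{\partial\Om}=0$ (a Neumann eigenfunction for $\la^2$), and conversely such eigenfunctions clearly lie in $G_\la$ and in $\ker\BC_\la$. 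This proves (1).

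For (2) I would use the variational (Glazman / Dirichlet–Neumann bracketing) description of $N_\DR$ and $N_\NR$ together with the orthogonal decomposition of the Neumann form domain along $G_\la$. The key identity is that the Neumann form $\|\nabla f\|^2-\la^2\|f\|^2$, restricted to the $L_2(\Om)$-orthogonal complement of $G_\la$, is nonnegative with trivial kernel there, so that all of its negative and zero directions, beyond those already counted by $N_\DR(\la)$, live inside $G_\la$; on $G_\la$ that form is exactly $\BC_\la$. Counting dimensions of the spectral subspace of the Neumann form below the level $\la^2$ and comparing with the Dirichlet one then gives $N_\NR(\la)-N_\DR(\la)=g^-(\la)+(\text{contribution of the zero level})$, and the zero level inside $G_\la$ is precisely $n_\DR(\la)$ Dirichlet eigenfunctions plus the Neumann eigenfunctions at $\la^2$ — but the latter are already included in $N_\NR(\la)$ since that interval is half-open at $\la^2$... so one must track carefully which of the eigenfunctions at the exact level $\la^2$ are counted in $N_\DR,N_\NR$ and which sit in $\ker\BC_\la$, and the bookkeeping collapses to the stated $n_\DR(\la)+g^-(\la)$.

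The main obstacle I expect is making the integration-by-parts identity \eqref{eq:green} and the boundary pairing rigorous for a general open bounded $\Om$ with no smoothness assumption on $\partial\Om$: the normal derivative $\partial_n f$ need not exist classically, $G_\la$-functions need not have $H^1$ regularity up to the boundary, and the form $\|\nabla f\|^2-\la^2\|f\|^2$ may fail to be closable a priori on $G_\la$. The honest route is to avoid boundary traces altogether and work intrinsically: define $\BC_\la$ via the closure of the form on $G_\la\cap H^1(\Om)$ (or on $G_\la\cap H^1_0(\Om)^{\perp}$), express both $N_\DR(\la)$ and $N_\NR(\la)$ through the quadratic form $\|\nabla u\|^2$ on the appropriate form domains $H^1_0(\Om)$ and $H^1(\Om)$, and use the abstract fact that $H^1(\Om)=H^1_0(\Om)\oplus_{\text{a}}\big(H^1(\Om)\cap G_\la\big)$ in a form sense — an orthogonality that holds precisely because elements of $G_\la$ are the form-orthogonal complement of $H^1_0(\Om)$ with respect to the shifted form. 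Once that decomposition is in hand the dimension count is routine linear algebra (Glazman's lemma applied block by block), and the delicate point reduces to the careful treatment of the kernel, i.e. of eigenfunctions at the exact energy $\la^2$, which is what produces the extra term $n_\DR(\la)$.
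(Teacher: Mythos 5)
The paper does not actually prove this lemma; it cites \cite[Lemma 1.2]{S} and \cite[Theorem 1.7]{S}. So your proposal has to be judged as a self-contained argument, and as such it has a decisive gap already in part (1). Membership of $f$ in $\ker\BC_\la$ is equivalent to the vanishing of the sesquilinear form associated with $\BC_\la$ against \emph{all} test elements $g$ of the form domain, whereas your argument uses only the vanishing of the diagonal value $\langle \partial_n f, f\rangle_{\partial\Om}=0$; from that single scalar identity you cannot conclude that $f|_{\partial\Om}=0$ or $\partial_n f|_{\partial\Om}=0$. Worse, the dichotomy you aim for is not the statement being proved: the kernel is a linear subspace and is claimed to be \emph{spanned by} (i.e.\ equal to the sum of) the Dirichlet and Neumann eigenspaces, so a typical kernel element is $f=f_{\DR}+f_{\NR}$ with neither trace vanishing. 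A subspace can equal the union of two subspaces only if one contains the other, so an ``either/or'' argument cannot yield (1). The correct route requires showing that orthogonality to all of $G_\la$ in the form sense forces such a decomposition, which rests on the solvability theory of the Dirichlet problem at energy $\la^2$ --- exactly the point your sketch bypasses.

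For part (2) the identity is exact, and all of its content sits in the bookkeeping you explicitly defer (``the bookkeeping collapses to\ldots''). The decomposition $H^1(\Om)=H^1_0(\Om)+\bigl(H^1(\Om)\cap G_\la\bigr)$ that you invoke as an abstract fact is precisely where the term $n_\DR(\la)$ originates: when $\la^2$ is a Dirichlet eigenvalue the two summands intersect in the Dirichlet eigenspace, the sum is not direct, and the shifted form is indefinite on $H^1_0(\Om)$, so there is no ``form-orthogonal complement'' in the usual Hilbert-space sense. Without carrying out the count --- in particular, noting that both $N_\DR(\la)$ and $N_\NR(\la)$ use the half-open interval $[0,\la^2)$ and hence count none of the eigenfunctions at the exact level $\la^2$, while those eigenfunctions populate $\ker\BC_\la$ by part (1) --- the equality, as opposed to an inequality, is not established. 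Finally, elements of $G_\la$ need not lie in $H^1(\Om)$ for a rough boundary, so the definition of $\BC_\la$ and of $g^-(\la)$ via the form on $G_\la\cap H^1(\Om)$ requires the closability and semiboundedness arguments you flag but do not supply; these are supplied in \cite{S}, which is why the paper simply cites it.
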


\begin{proof}
This is a particular case of \cite[Lemma 1.2]{S} and \cite[Theorem
1.7]{S}.
\end{proof}

\begin{remark}
For domains smooth boundaries, Lemma \ref{lem:Fr}(2) was proved in
\cite{Fr}. In this section, we shall only need the estimate
$\;N_\NR(\la)-N_\DR(\la)\leq n_\DR(\la)+g_\la\;$ which can
easily be deduced from the variational principle, using integration by
parts \cite{Fi}. 
\end{remark}

In order to obtain effective estimates with the use of Lemma
\ref{lem:Fr}, we need some information about the space
$\,G_\la\,$. It is not easy to describe, as it depends on $\,\Om\,$.
However, the subspace $\,G_\la\,$ always contains restrictions to
$\,\Om\,$ of the functions $\,f\,$ satisfying the equation
$\,-\Delta f=\la^2f\,$ on the whole space $\,\R^d\,$. In particular, $\,G_\la\,$ contains restrictions to $\,\Om\,$ of the
functions 
\begin{equation}\label{f}
f_u(x)\ =\ \int_{\Sbb_\la^{d-1}}\,e^{-ix\cdot\xi}\,u(\xi)\,\dr\nu(\xi)\,,
\end{equation}
where $\,\nu(\xi)\,$ is a finite Borel measure on the sphere 
$\,\Sbb_\la^{d-1}:=\{\xi\in\R^d:|\xi|=\la\}\,$ and $\,u\,$ is a function from $\,L_2(\Sbb_\la^{d-1},\nu)\,$. Note that the integral in the right hand side of \eqref{f} defines a real analytic function on $\,\R^n\,$, so that
$\,\left.f_u\right|_\Om\not\equiv0\,$ for all nonzero $\,u\in L_2(\Sbb_\la^{d-1},\nu)\,$.

Let $\,K_{\la,\nu}\,$ be the operator in the space $\,L_2(\Sbb_\la^{d-1},\nu)\,$ given by the integral kernel
$$
\KC(\xi,\eta)\ :=\ -\,|\xi-\eta|^2\,\hat\chi_\Om(\xi-\eta)\,,
$$
where $\,\hat\chi_\Om\,$ is the Fourier transform of the characteristic function $\,\chi_\Om\,$ of the set $\,\Om\,$. One can easily see that 
\begin{equation}\label{KC}
\|\nabla f_u\|_{L_2(\Om)}^2-\lambda^2\,\|f_u\|_{L_2(\Om)}^2\ =\ \frac12\,(K_{\la,\nu} u,u)_{L_2(\Sbb_\la^{d-1},\nu)}
\end{equation}
for all $\,u\in L_2(\Sbb_\la^{d-1},\nu)\,$

\begin{corollary}\label{C:DN1}
For all open sets $\,\Om\,$, all $\,\la>0\,$ and all Borel measures $\,\nu\,$ on $\,\Sbb_\la^{d-1}\,$ we have
\begin{equation}\label{DN1}
N_\NR(\la)-N_\DR(\la)\ \geq\ \NC(K_{\la,\nu},0)+n_\DR(\la)\,.
\end{equation}
\end{corollary}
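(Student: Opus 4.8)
The plan is to combine Lemma \ref{lem:Fr}(2) with the identity \eqref{KC} and a monotonicity argument for the counting function of $\BC_\la$. First I would observe that, by Lemma \ref{lem:Fr}(2), it suffices to show that the dimension $g^-(\la)$ of the negative eigenspace of $\BC_\la$ is at least $\NC(K_{\la,\nu},0)$; adding the term $n_\DR(\la)$ then gives \eqref{DN1}. Next, consider the linear map $u\mapsto f_u|_\Om$ from $L_2(\Sbb_\la^{d-1},\nu)$ into $G_\la$ defined by \eqref{f}. As noted right after \eqref{f}, this map is injective because $f_u$ is real analytic on $\R^d$, so a nonzero $u$ cannot produce a function vanishing identically on the open set $\Om$. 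Hence its image $V_\nu\subset G_\la$ is a subspace on which the quadratic form $q(f):=\|\nabla f\|_{L_2(\Om)}^2-\la^2\|f\|_{L_2(\Om)}^2$ is, via \eqref{KC}, unitarily equivalent (up to the harmless factor $\tfrac12$) to the quadratic form of $K_{\la,\nu}$ on $L_2(\Sbb_\la^{d-1},\nu)$.

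Then I would invoke the variational principle for $g^-(\la)$: since $\BC_\la$ is the self-adjoint operator in $G_\la$ generated by the truncation of $q$ to $G_\la$, the dimension of its negative eigenspace equals the supremum, over all subspaces $W\subseteq G_\la$ on which $q$ is negative definite, of $\dim W$. Restricting attention to subspaces $W\subseteq V_\nu$ and transporting everything back to $L_2(\Sbb_\la^{d-1},\nu)$ through the injective map above, the maximal dimension of a subspace on which $q|_{V_\nu}$ (equivalently $\tfrac12(K_{\la,\nu}\,\cdot,\cdot)$) is negative definite is exactly $\NC(K_{\la,\nu},0)$, the dimension of the negative spectral subspace of $K_{\la,\nu}$. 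Therefore $g^-(\la)\ge\NC(K_{\la,\nu},0)$, and combining with Lemma \ref{lem:Fr}(2) yields the claim. One should also note that \eqref{KC} is stated for $f_u\in G_\la$, and these functions lie in $L_2(\Om)$ with $-\Delta f_u=\la^2 f_u$, so the form $q$ is well-defined on $V_\nu$ and the truncation in the definition of $\BC_\la$ applies.

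The main obstacle I anticipate is purely a matter of care rather than depth: making sure the form $q$ restricted to $G_\la$ is bounded below and closable (so that $\BC_\la$ and hence $g^-(\la)$ are genuinely defined), and that the variational characterization of $g^-(\la)$ as the max-dimension of a negative-definite subspace is legitimate in this possibly infinite-dimensional, possibly form-unbounded setting. For \emph{bounded} $\Om$ this is covered by the cited results \cite{S}; for general open $\Om$ one either restricts to bounded $\Om$ (as in Lemma \ref{lem:Fr}) or notes that only the lower bound $N_\NR(\la)-N_\DR(\la)\ge n_\DR(\la)+g^-(\la)$ is needed, which follows from the variational principle and integration by parts as in the Remark after Lemma \ref{lem:Fr}. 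A second minor point is that $V_\nu$ need not be closed in $G_\la$, but the variational principle only requires finite-dimensional negative-definite subspaces, so working with finite-dimensional $W\subseteq V_\nu$ spanned by finitely many $f_{u_j}$ avoids any closure issue.
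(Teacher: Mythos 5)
Your argument is correct and follows essentially the same route as the paper: choose the negative spectral subspace of $K_{\la,\nu}$, push it forward through the injective map $u\mapsto f_u|_\Om$ into $G_\la$, use \eqref{KC} to see the form $q$ is negative there, and apply the variational principle to get $g^-(\la)\geq\NC(K_{\la,\nu},0)$, then finish with Lemma~\ref{lem:Fr}(2). One small wording caveat: $u\mapsto f_u|_\Om$ is injective but not isometric, so ``unitarily equivalent'' overstates the relation between the two quadratic forms; what you actually use (and what suffices) is injectivity plus the identity $q(f_u)=\tfrac12(K_{\la,\nu}u,u)$.
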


\begin{proof}
Denote by $\,\LC_\la^-\,$ the negative eigensubspace of the operator $\,K_{\la,\nu}\,$,
and let
$\,L_\la^-=\{f_u\,:\,u\in\LC_\la^-\}\,$.
In view of \eqref{KC},    $\,(\BC_\la f,f)_{L_2(\Om)}<0\,$ for all nonzero $\,f\in L_\la^-\,$. By the variational principle, $\,g_-(\la)\geq\dim L_\la^-=\NC(K_{\la,\nu},0)\,$. This inequality and Lemma \ref{lem:Fr}(2) imply \eqref{DN1}.
\end{proof}

One can slightly improve the estimate \eqref{DN1} assuming that
\begin{enumerate}
\item[{\bf(C$_3$)}]
the subspace $\,L_\la\,$ does not contain a Dirchlet or Neumann eigenfunction of the form $\,f_u\,$ with $\,u\in L_2(\Sbb_\la^{d-1},\nu)\,$.
\end{enumerate}

\begin{corollary}\label{C:DN2}
If the condition {\bf(C$_3$)} is fulfilled then
\begin{equation}\label{DN2}
N_\NR(\la)-N_\DR(\la)\ \geq\ \NC(K_{\la,\nu},0)+\dim\ker K_{\la,\nu}+n_\DR(\la)\,.
\end{equation}
\end{corollary}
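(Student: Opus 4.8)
The plan is to adapt the proof of Corollary \ref{C:DN1}, keeping the same geometric construction but squeezing out the extra $\dim\ker K_{\la,\nu}$ summand by exploiting the additional hypothesis \textbf{(C$_3$)}. As in that proof, I set $\LC_\la^-$ to be the negative eigensubspace of $K_{\la,\nu}$ and now also set $\LC_\la^0:=\ker K_{\la,\nu}$; correspondingly I consider $L_\la^-:=\{f_u:u\in\LC_\la^-\}$ and $L_\la^0:=\{f_u:u\in\LC_\la^0\}$. Because the map $u\mapsto f_u$ is injective on $L_2(\Sbb_\la^{d-1},\nu)$ (the integral in \eqref{f} is real analytic, so $f_u|_\Om\not\equiv0$ for $u\ne0$, as noted before Corollary \ref{C:DN1}) and the various eigensubspaces of the Hermitian operator $K_{\la,\nu}$ are mutually orthogonal, the spaces $L_\la^-$ and $L_\la^0$ are linearly independent and their direct sum has dimension $\NC(K_{\la,\nu},0)+\dim\ker K_{\la,\nu}$. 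By \eqref{KC}, $(\BC_\la f_u,f_u)_{L_2(\Om)}=\tfrac12(K_{\la,\nu}u,u)$, which is strictly negative for $0\ne u\in\LC_\la^-$ and equals $0$ for $u\in\LC_\la^0$.

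First I would show that $\BC_\la$ is negative semidefinite on the combined space $L_\la^-\oplus L_\la^0$ and, crucially, that it has no nonzero null vector there. Negative semidefiniteness follows by writing a general element as $f_u+f_v$ with $u\in\LC_\la^-$, $v\in\LC_\la^0$, noting $f_u+f_v=f_{u+v}$, and using \eqref{KC} together with $(K_{\la,\nu}(u+v),u+v)=(K_{\la,\nu}u,u)\le0$ since $v$ is in the kernel and the eigenspaces are orthogonal. Suppose now $(\BC_\la g,g)_{L_2(\Om)}=0$ for some $g=f_{u+v}$ in this space; then $(K_{\la,\nu}(u+v),u+v)=0$, which forces $u=0$ (the negative part contributes strictly negatively), so $g=f_v$ with $v\in\ker K_{\la,\nu}$. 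Thus $g\in L_\la^0\subset L_\la\subset G_\la$ and $(\BC_\la g,g)_{L_2(\Om)}=0$ with $-\Delta g=\la^2 g$; the usual argument (e.g. via \cite[Theorem 1.7]{S}, or directly: a null vector of the quadratic form $\|\nabla\cdot\|^2-\la^2\|\cdot\|^2$ restricted to $G_\la$ that minimizes it must satisfy the natural — Neumann — boundary condition, while lying in $G_\la$ makes it a Dirichlet/Neumann eigenfunction) shows $g$ is a Dirichlet or Neumann eigenfunction corresponding to $\la^2$ of the form $f_v$. Condition \textbf{(C$_3$)} rules this out unless $g=0$, hence $v=0$.

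Therefore $\BC_\la$ is \emph{strictly} negative definite on $L_\la^-\oplus L_\la^0$, a subspace of $G_\la$ of dimension $\NC(K_{\la,\nu},0)+\dim\ker K_{\la,\nu}$, and the variational principle gives
\begin{equation*}
g^-(\la)\ \geq\ \dim\bigl(L_\la^-\oplus L_\la^0\bigr)\ =\ \NC(K_{\la,\nu},0)+\dim\ker K_{\la,\nu}\,.
\end{equation*}
Combining this with Lemma \ref{lem:Fr}(2), which states $N_\NR(\la)-N_\DR(\la)=n_\DR(\la)+g^-(\la)$, yields \eqref{DN2}.

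The main obstacle is the step identifying a form-null vector of $\BC_\la$ inside $G_\la$ with an actual Dirichlet or Neumann eigenfunction of the shape $f_u$: it is what makes \textbf{(C$_3$)} applicable and what distinguishes \eqref{DN2} from \eqref{DN1}. Morally this is because $\ker\BC_\la$ equals the span of the Dirichlet and Neumann eigenfunctions for $\la^2$ by Lemma \ref{lem:Fr}(1), so any $g\in L_\la^0$ with $(\BC_\la g,g)=0$ lies in $\ker\BC_\la$ (a nonnegative form vanishes exactly on its kernel, once strict negativity on the rest has been removed), hence is such a combination; but since $g=f_v$ already has the required form, \textbf{(C$_3$)} forces $g=0$. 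Making the passage "$(\BC_\la g,g)=0\Rightarrow g\in\ker\BC_\la$" rigorous requires knowing $\BC_\la$ is bounded below on $G_\la$ modulo its kernel, which is exactly the content of the cited results \cite[Lemma 1.2]{S}, \cite[Theorem 1.7]{S}; granting those, the remaining arguments are the routine linear-algebra and variational steps sketched above.
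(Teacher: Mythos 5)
Your setup (the subspaces $L_\la^-$, $L_\la^0$, the identity $f_u+f_v=f_{u+v}$, the dimension count, and the negative semidefiniteness of $\BC_\la$ on $L_\la^-+L_\la^0$ via \eqref{KC}) matches the paper's proof. But the pivot of your argument --- that $\BC_\la$ is \emph{strictly} negative definite on $L_\la^-+L_\la^0$ --- is false, and cannot be repaired. By \eqref{KC} we have $(\BC_\la f_v,f_v)_{L_2(\Om)}=\tfrac12(K_{\la,\nu}v,v)=0$ for \emph{every} $v\in\ker K_{\la,\nu}$, so the quadratic form genuinely vanishes on all of $L_\la^0$; strict negativity would force $\ker K_{\la,\nu}=\{0\}$, i.e.\ precisely the case where the corollary adds nothing to Corollary \ref{C:DN1}. (In the situation of Remark \ref{R:DN2}, $K_{\la,\nu}=0$ and the form vanishes identically on the two-dimensional space $L_\la^0$, yet the corollary still asserts $g^-(\la)\geq2$.) The step you use to exclude form-null vectors, ``$(\BC_\la g,g)=0\Rightarrow g\in\ker\BC_\la$,'' is not valid: $\BC_\la$ is indefinite on $G_\la$, and a vector can annihilate an indefinite quadratic form without lying in the operator's kernel. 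Neither ``a nonnegative form vanishes exactly on its kernel'' (the form is not nonnegative) nor ``$\BC_\la$ is bounded below modulo its kernel'' delivers that implication.

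What condition {\bf(C$_3$)} together with Lemma \ref{lem:Fr}(1) actually gives is the weaker, and correct, statement that the \emph{operator} kernel satisfies $\ker\BC_\la\cap(L_\la^-+L_\la^0)=\{0\}$: any element of $\ker\BC_\la$ is built from Dirichlet and Neumann eigenfunctions, while every nonzero element of $L_\la^-+L_\la^0$ has the form $f_w$, which {\bf(C$_3$)} forbids. The variational argument must then be run with the pair of facts ``form $\leq0$ on $V:=L_\la^-+L_\la^0$'' and ``$V\cap\ker\BC_\la=\{0\}$,'' not with strict negativity. Concretely: let $P_-$ be the spectral projection of $\BC_\la$ onto $(-\infty,0)$ and suppose $P_-f=0$ for some $f\in V$; writing $f=f_0+f_+$ with $f_0\in\ker\BC_\la$ and $f_+$ in the positive spectral subspace, one gets $0\geq(\BC_\la f,f)=(\BC_\la f_+,f_+)\geq0$, hence $f_+=0$ and $f=f_0\in\ker\BC_\la\cap V=\{0\}$. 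Thus $P_-$ is injective on $V$ and $g^-(\la)\geq\dim V=\NC(K_{\la,\nu},0)+\dim\ker K_{\la,\nu}$, after which Lemma \ref{lem:Fr}(2) finishes the proof as you indicate. This is the route the paper's ``standard variational arguments'' refer to; your final two steps (dimension count and Lemma \ref{lem:Fr}(2)) are fine once this replacement is made.
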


\begin{proof}
Let $\,L_\la^0=\{f_u\,:\,u\in\ker K_{\la,\nu}\}\,$. By \eqref{KC},  $\,(\BC_\la f,f)_{L_2(\Om)}\leq0\,$ for all  functions $\,f\in L_\la^-+ L_\la^0\,$. Also, Lemma \ref{lem:Fr}(1) and the condition {\bf(C$_3$)} imply that $\,\ker\BC_\la\bigcap\left(L_\la^-+ L_\la^0\right)=\{0\}\,$. Now the standard variational arguments show that
$$
g_-(\la)\geq\dim\left(L_\la^-+ L_\la^0\right)=\NC(K_{\la,\nu},0)+\dim\ker K_{\la,\nu}\,,
$$
and \eqref{DN2} follows from Lemma \ref{lem:Fr}(2).
\end{proof}

\begin{remark}\label{R:DN1}
Since $\,\KC_\la(\xi,\xi)\equiv0\,$, we have $\,\vk(\xi,\eta)=-|\KC_\la(\xi,\eta)|\,$ (see Remark \ref{R:kappa}). Thus $\,\inf\vk<0\,$ and, consequently, $\,\NC(K_{\la,\nu},0)\geq1\,$. Therefore \eqref{DN1} implies the estimate $\,N_\NR(\la)-N_\DR(\la)\geq1+n_\DR(\la)\,$, which was obtained in \cite{Fr} and \cite{Fi}.
\end{remark}

\begin{remark}\label{R:DN2}
If
$\,\hat\chi_\Om(\theta)=0\,$ for some $\,\theta\in\R^d\,$ and $\,\nu\,$ is the sum of $\,\de$-measures at any two points 
$\,\xi,\eta\in\Sbb_\la^{d-1}\,$ such that $\,\xi-\eta=\theta\,$, then $\,K_{\la,\nu}=0\,$. Applying Corollary \ref{C:DN2}, we see that $\,N_\NR(\la)-N_\DR(\la)\geq2+n_\DR(\la)\,$ for all
$\,\la\geq|\theta|/2\,$. This estimate was discusses in \cite{BLP}.
\end{remark}

Since the function $\,\KC\,$ is continuous, it is almost constant for small $\,\xi\,$ and $\,\eta\,$. Therefore Theorem \ref{T:integral} is not well suited for estimating $\,\NC(K_{\la,\nu},0)\,$ with small $\,\la\,$ (see the remark in Subsection \ref{S:remarks}). However, it is useful for studying the behaviour of  $\,N_\NR(\la)-N_\DR(\la)\,$ for large values of $\,\la\,$.

\begin{lemma}\label{L:FS}
Denote 
$$
C_\Om(\la,r)\ =\ \frac{c_{d-1}\,r^4}{18}\left(\inf_{|\theta|=r}|\hat\chi_\Om(\theta)|^2\right)\la^{d-4}\,|\Om_{\la^{-1}}|^{-1}\,,
$$
where $\,c_{d-1}\,$ is the volume of the unit $\,(d-1)$-dimensional sphere in $\,\R^d\,$ and $\,|\Om_{\la^{-1}}|\,$ is the volume of the set $\,\{x\in\Om\,:\,\dist(x,\partial\Om)<\la^{-1}\}\,$. If $\,\nu\,$ is the Euclidean measure on $\,\Sbb_\la^{d-1}\,$ then
$\,\NC(K_{\la,\nu},0)\geq\frac12+\frac{C_\Om(\la,r)}{16}\,$ for all $\,r\in(0,2\la)\,$.

\end{lemma}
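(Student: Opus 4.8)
The idea is to apply Theorem \ref{T:integral} to the operator $\,K_{\la,\nu}\,$ with a suitably chosen symmetric measure $\,\mu\,$ on $\,\Sbb_\la^{d-1}\times\Sbb_\la^{d-1}\,$, and then to estimate the resulting constant $\,C_0(\mu)\,$ from below by $\,C_\Om(\la,r)\,$. Since $\,\KC_\la(\xi,\xi)\equiv0\,$, Remark \ref{R:kappa} gives $\,\vk(\xi,\eta)=-|\KC_\la(\xi,\eta)|=-|\xi-\eta|^2\,|\hat\chi_\Om(\xi-\eta)|\,$, and with $\,t=0\,$ the condition {\bf(C$_2$)} becomes $\,0<\int|\xi-\eta|^2\,|\hat\chi_\Om(\xi-\eta)|\,\dr\mu(\xi,\eta)<\infty\,$. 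The natural choice is to take $\,\mu\,$ supported on the ``sphere of chords of length $\,r$'': concretely, for a fixed $\,r\in(0,2\la)\,$, let $\,\mu\,$ be the image under $\,\xi\mapsto(\xi,\eta)\,$ (suitably symmetrised) of the Euclidean measure on the set of pairs $\,(\xi,\eta)\in\Sbb_\la^{d-1}\times\Sbb_\la^{d-1}\,$ with $\,|\xi-\eta|=r\,$. On this set $\,|\xi-\eta|^2=r^2\,$, so $\,\left(0-\vk(\xi,\eta)\right)_+=r^2\,|\hat\chi_\Om(\xi-\eta)|\geq r^2\inf_{|\theta|=r}|\hat\chi_\Om(\theta)|\,$.

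First I would make the construction of $\,\mu\,$ precise and compute its marginal $\,\mu'\,$. By rotational symmetry, $\,\mu'\,$ should be a constant multiple of the Euclidean measure $\,\nu\,$ on $\,\Sbb_\la^{d-1}\,$; the total mass of $\,\nu\,$ is $\,c_{d-1}\la^{d-1}\,$. Then the numerator of $\,C_0(\mu)\,$ is bounded below by $\,\bigl(r^2\inf_{|\theta|=r}|\hat\chi_\Om(\theta)|\bigr)^2\,\mu(\Sbb_\la^{d-1}\times\Sbb_\la^{d-1})^2\,$. For the denominator $\,\iint|\KC_\la(\xi,\eta)|^2\,\dr\mu'(\xi)\,\dr\mu'(\eta)\,$, I would use Plancherel's theorem: $\,|\KC_\la(\xi,\eta)|^2=|\xi-\eta|^4\,|\hat\chi_\Om(\xi-\eta)|^2\,$, and integrating $\,|\hat\chi_\Om|^2\,$ against the convolution $\,\nu*\check\nu\,$ on $\,\R^d\,$ produces, via Parseval, an expression controlled by $\,\int_\Om\int_\Om\bigl|\widehat{\nu}(x-y)\bigr|^2\dots\,$ — more directly, one estimates $\,|\xi-\eta|^4|\hat\chi_\Om(\xi-\eta)|^2\leq(2\la)^4|\hat\chi_\Om(\xi-\eta)|^2\,$ is too lossy, so instead one should keep the factor and recognise that $\,\iint_{\Sbb\times\Sbb}|\hat\chi_\Om(\xi-\eta)|^2\,\dr\nu(\xi)\dr\nu(\eta)\,$ is, up to the spherical-measure normalisation, the $\,L_2(\Om\times\Om)\,$-norm squared of $\,(x,y)\mapsto\widehat{\nu}(x-y)\,$, which for $\,\nu\,$ the Euclidean surface measure on a sphere of radius $\,\la\,$ decays and, integrated over $\,\Om\,$, is dominated by a constant times $\,\la^{d-1}\cdot\la^{d-1}\cdot\la^{-(d-1)}\,|\Om_{\la^{-1}}|=\la^{d-1}|\Om_{\la^{-1}}|\,$ — this is the standard ``stationary phase near the boundary collar'' estimate. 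Assembling numerator over denominator and tracking the powers of $\,\la\,$ and $\,r\,$ should yield exactly $\,\dfrac{c_{d-1}r^4}{18}\bigl(\inf_{|\theta|=r}|\hat\chi_\Om(\theta)|^2\bigr)\la^{d-4}|\Om_{\la^{-1}}|^{-1}\,$, where the numerical constant $\,1/18=1/(2\cdot 9)\,$ should emerge from the combination of the factor $\,\tfrac12\,$ in \eqref{KC} (which rescales $\,\KC\,$ hence doubles the counting-function-relevant kernel) together with a geometric constant $\,\tfrac19\,$ coming from the solid-angle fraction of the sphere that the chord-length-$r$ slice occupies relative to its appearance in both marginal normalisation and the off-diagonal double integral.

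The main obstacle is the precise evaluation of the denominator integral $\,\iint|\xi-\eta|^4|\hat\chi_\Om(\xi-\eta)|^2\,\dr\nu(\xi)\,\dr\nu(\eta)\,$ and proving it is bounded above by a constant times $\,\la^{2(d-1)}\cdot\la^{-(d-1)}\cdot\la^{4}\cdot c_{d-1}^{-1}\cdot|\Om_{\la^{-1}}|\,$ (arranged so the ratio comes out clean). This requires the Fourier-analytic fact that $\,\int_\Om\int_\Om|K_\la(x,y)|^2\,\dr x\,\dr y\lesssim\la^{d-1}|\Om_{\la^{-1}}|\,$ where $\,K_\la\,$ is the kernel of the spectral projector (equivalently, the Bochner--Riesz-type kernel $\,\widehat{\nu_\la}(x-y)\,$ for surface measure on $\,\Sbb_\la^{d-1}$): the bulk of $\,\Om\,$ contributes a Hilbert--Schmidt norm growing only like $\,\la^{d-1}|\Om|\,$? — no, one needs the sharper collar estimate, which is exactly the content that makes $\,|\Om_{\la^{-1}}|\,$ rather than $\,|\Om|\,$ appear, and this is the delicate point; I would cite or reproduce the known asymptotics of $\,\|E_\la\|_{HS}^2\,$ restricted to $\,\Om\,$. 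Once that estimate is in hand, the rest is bookkeeping: verify {\bf(C$_2$)} holds (nonvanishing of $\,\hat\chi_\Om\,$ is not needed since $\,\inf_{|\theta|=r}|\hat\chi_\Om(\theta)|\,$ may be zero, in which case $\,C_\Om(\la,r)=0\,$ and the claimed inequality $\,\NC(K_{\la,\nu},0)\geq\tfrac12+0\,$ is automatic from Remark \ref{R:DN1}), apply \eqref{estimate}, and conclude.
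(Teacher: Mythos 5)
Your overall strategy matches the paper's: put a symmetric $\mu$ on the chord-length-$r$ subset of $\Sbb_\la^{d-1}\times\Sbb_\la^{d-1}$, lower-bound the numerator of $C_0(\mu)$ via $r^2\inf_{|\theta|=r}|\hat\chi_\Om(\theta)|$, upper-bound the denominator, and apply Theorem~\ref{T:integral}. The marginal-by-rotational-symmetry assertion is fine in principle (the paper verifies $\mu'_r=m_{d-1}$ by an explicit differentiation argument, but $C_t(\mu)$ is invariant under $\mu\mapsto c\mu$, so an unnormalised chord measure would serve equally well), and your observation that the degenerate case $\inf_{|\theta|=r}|\hat\chi_\Om(\theta)|=0$ is already covered by Remark~\ref{R:DN1} is correct.

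Where the proposal goes wrong is the account of the constant $\tfrac{1}{18}$. The factor $\tfrac12$ in \eqref{KC} cannot enter: Lemma~\ref{L:FS} is a statement about $K_{\la,\nu}$ alone, and in any event multiplying a self-adjoint operator by a positive scalar does not change the number of negative eigenvalues. Nor is there a hidden geometric factor of $\tfrac19$ from a solid-angle count. The entire package $18\,c_{d-1}^{-1}\,\la^{4-d}\,|\Om_{\la^{-1}}|$ arises from a single inequality,
$$
\iint_{\Sbb_\la^{d-1}\times\Sbb_\la^{d-1}}|\xi-\eta|^4\,|\hat\chi_\Om(\xi-\eta)|^2\,\dr m_{d-1}(\xi)\,\dr m_{d-1}(\eta)\ \leq\ 18\,c_{d-1}^{-1}\,\la^{4-d}\,|\Om_{\la^{-1}}|\,,
$$
and this is precisely the estimate you correctly identify as the crux but do not establish. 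The paper is explicit that this bound is taken from \cite{FS}, where it is proved as part of the analysis of the restricted spectral projector's Hilbert--Schmidt norm. Your sketch of a ``stationary phase near the boundary collar'' route points in the right general direction, but as written it is not a proof, and the story you back-fit to explain $1/18$ is not how the constant actually arises. Either cite \cite{FS} outright, as the paper does, or reproduce the Fourier-analytic estimate in full; without one or the other the lemma is not proved.
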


\begin{proof}
Let  $\,m_n\,$ be the normalized Euclidean measure on an $\,n$-dimensional sphere $\,\Sbb^n_t:=\{\xi\in\R^{n+1}\,:\,|\xi|=t\}\,$, such that $\,m_n(\Sbb^n_t)=1\,$.
Consider the symmetric probability measure 
$\,\mu_r\,$ on $\,\Sbb_\la^{d-1}\times\Sbb_\la^{d-1}\,$ defined by the equality
\begin{multline*}
\int_{\Sbb_\la^{d-1}\times\Sbb_\la^{d-1}} f(\xi,\eta)\,\dr\mu_r(\xi,\eta)\\
=\  \frac12
\int_{\Sbb_\la^{d-1}}\int_{\eta\in\Sbb_\la^{d-1}:
|\xi-\eta|=r}\left(f(\xi,\eta)+f(\eta,\xi)\right)\dr m_{d-2}(\eta)\,\dr m_{d-1}(\xi)\,.
\end{multline*}

For all functions $\,g\,$ on $\,\Sbb_\la^{d-1}$,  we obviously have 
\begin{multline}\label{marginal1}
\int_{\Sbb_\la^{d-1}}\int_{\eta\in\Sbb_\la^{d-1}:
|\xi-\eta|=r}g(\xi)\,\dr m_{d-2}(\eta)\,\dr m_{d-1}(\xi)
\\
=\ \int_{\Sbb_\la^{d-1}}g(\xi)\,\dr m_{d-1}(\xi)\,.
\end{multline}
On the other hand,
\begin{multline}\label{marginal2}
\int_{\Sbb_\la^{d-1}}\int_{\eta\in\Sbb_\la^{d-1}:|\xi-\eta|\leq r}g(\eta)\,\dr m_{d-1}(\eta)\,\dr m_{d-1}(\xi)\\
=\ \iint_{\Sbb_\la^{d-1}\times\Sbb_\la^{d-1}}\psi_r(\xi,\eta)\,g(\eta)\,\dr m_{d-1}(\eta)\,\dr m_{d-1}(\xi)\\
=\ C_\la(r)\int_{\Sbb_\la^{d-1}}g(\eta)\,\dr m_{d-1}(\eta)\,,
\end{multline}
where $\,\psi_r\,$ is the characteristic function of the set 
$$
\{(\xi,\eta)\in\Sbb_\la^{d-1}\times\Sbb_\la^{d-1}\,:\,|\xi-\eta|\leq r\}
$$
and $\,C_\la(r)=\int_{\xi\in\Sbb_\la^{d-1}:
|\xi-\eta|\leq r}\,\dr m_{d-1}(\xi)\,$. Since 
\begin{multline*}
\frac{\dr}{\dr r}\left(\int_{\eta\in\Sbb_\la^{d-1}:|\xi-\eta|\leq r}g(\eta)\,\dr m_{d-1}(\eta)\right)\\
=\ \frac{c_{d-2}\,r^{d-2}}{c_{d-1}\,\la^{d-1}}
\int_{\eta\in\Sbb_\la^{d-1}:|\xi-\eta|=r}g(\eta)\,\dr m_{d-2}(\eta),
\end{multline*}
differentiating the right and left hand sides of the identity \eqref{marginal2}, we obtain
\begin{multline}\label{marginal3}
\int_{\Sbb_\la^{d-1}}\int_{\eta\in\Sbb_\la^{d-1}:
|\xi-\eta|=r}g(\eta)\,\dr m_{d-2}(\eta)\,\dr m_{d-1}(\xi)\\ 
=\ \int_{\Sbb_\la^{d-1}}g(\eta)\,\dr m_{d-1}(\eta)\,.
\end{multline}
The equalities \eqref{marginal1} and \eqref{marginal3} imply that the marginal $\,\mu'_r\,$ of the measure $\,\mu_r\,$ coincides with $\,m_{d-1}\,$.

Using Remark \ref{R:kappa}, we obtain
\begin{multline*}
\int_{\Sbb_\la^{d-1}\times\Sbb_\la^{d-1}}\left(-\vk(\xi,\eta)\right)_+\,\dr\mu_r(\xi,\eta)=\int_{\Sbb_\la^{d-1}\times\Sbb_\la^{d-1}}|\xi-\eta|^2\,|\hat\chi_\Om(\xi-\eta)|\,\dr\mu_r(\xi,\eta)\\
=
r^2\int_{\Sbb_\la^{d-1}}\int_{\eta\in\Sbb_\la^{d-1}:
|\xi-\eta|=r}
|\hat\chi_\Om(\xi-\eta)|\,\dr m_{d-2}(\eta)\,\dr m_{d-1}(\xi)
\geq r^2\inf_{|\theta|=r}|\hat\chi_\Om(\theta)|.
\end{multline*}
As was shown in \cite{FS},
\begin{multline*}
\int_{\Sbb_\la^{d-1}}\int_{\Sbb_\la^{d-1}}|\KC(\xi,\eta)|^2\,\mu'_r(\xi)\,\mu_r'(\eta)\\
=\ \int_{\Sbb_\la^{d-1}}\int_{\Sbb_\la^{d-1}}|\xi-\eta|^4\,|\hat\chi_\Om(\xi-\eta)|^2\,\dr m_{d-1}(\xi)\,\dr m_{d-1}(\eta)\\ \leq\ 18\,c_{d-1}^{-1}\,\la^{4-d}\,|\Om_{\la^{-1}}|\,.
\end{multline*}
Now the required estimate follows from Theorem \ref{T:integral}.
\end{proof}

Corollary \ref{C:DN1} and Lemma \ref{L:FS} imply that 
\begin{equation}\label{DN3}
N_\NR(\la)-N_\DR(\la)\ \geq\ \const\,\la^{d-4}\,|\Om_{\la^{-1}}|^{-1}
\end{equation}
for all sufficiently large $\,\la\,$. This estimate  was obtained by a different method in \cite{FS}. So far it is unknown whether one can get a better result in terms of growth as $\la\to\infty$ for a general domain $\,\Om\,$. 

For domains with smooth boundaries, the two-term Weyl asymptotic formula (see, for instance, \cite{I} or \cite{SV}) implies that $\,N_\NR(\la)-N_\DR(\la)\geq O(\la^{d-1})\,$. There are reasons to believe that the same is true for all domains but the standard techniques, which work for domains with irregular boundaries, fail to produce such results. It is possible that \eqref{DN3} can be improved by applying Theorem \ref{T:integral} with some other measures $\,\nu\,$ and $\,\mu\,$ to the operator $\,K_{\la,\nu}\,$ or/and more careful analysis of the asymptotic behaviour of the integrals in \eqref{constant} as $\,\la\to\infty\,$.

\end{document}